\newtheorem{theorem}{Theorem}[section]
\newtheorem{corollary}[theorem]{Corollary}
\newtheorem{lemma}[theorem]{Lemma}
\newtheorem{proposition}[theorem]{Proposition}
\theoremstyle{definition}
\newtheoremstyle{principle}{}{}{\itshape}{}{\bfseries}{.}{.5em}{\thmnote{#3}#1}
\theoremstyle{principle}
\newtheorem*{principle}{}
\newtheoremstyle{case}{}{}{}{}{\itshape}{.}{.5em}{\thmnote{Case #3}#1}
\theoremstyle{case}
\newcommand{\N}{\mathbb{N}}
\newcommand{\Iff}{\Longleftrightarrow}
\newcommand{\0}{\mathbf{0}}
\begin{document}
\title{Computably enumerable partial orders}
\author[P.~A.~Cholak]{Peter A. Cholak}
\address{Department of Mathematics\\
University of Notre Dame\\
Notre Dame, Indiana 46556 U.S.A.}
\email{cholak@nd.edu}
\author[D.~D.~Dzhafarov]{Damir D. Dzhafarov}
\address{Department of Mathematics\\
University of Notre Dame\\
Notre Dame, Indiana 46556 U.S.A.}
\email{ddzhafar@nd.edu}
\author[N.~Schweber]{Noah Schweber}
\address{Department of Mathematics\\
University of California\\
Berkeley, California 94720 U.S.A.\\
}
\email{schweber@math.berkeley.edu}
\author[R.~A.~Shore]{Richard A. Shore}
\address{Department of Mathematics\\
Cornell University\\
Ithaca, NY 14853 U.S.A.\\
}
\email{shore@math.cornell.edu}
\thanks{The authors are grateful to Julia Knight for helpful comments. This research was partly conducted while Dzhafarov and Shore were visiting the Institute for Mathematical Sciences at the National University of Singapore in 2011, with funding from the John Templeton Foundation. Cholak was partially supported by NSF grant DMS-0800198, Dzhafarov by an NSF Postdoctoral Fellowship, and Shore by NSF grant DMS-0852811.}
\maketitle

\begin{abstract}
We study the degree spectra and reverse-mathematical applications of computably enumerable and co-computably enumerable partial orders. We formulate versions of the chain/antichain principle and ascending/descending sequence principle for such orders, and show that the former is strictly stronger than the latter. We then show that every $\emptyset'$-computable structure (or even just of c.e.\ degree) has the same degree spectrum as some computably enumerable (co-c.e.)\ partial order, and hence that there is a c.e.\ (co-c.e.)\ partial order with spectrum equal to the set of nonzero degrees.
\end{abstract}

\section{Introduction}

A major theme of applied computability theory is the study of the algorithmic properties of countable structures and their presentations, and of the logical content of theorems concerning them. Partial orders, in particular, have been investigated extensively, most recently by Downey, Hirschfeldt, Lempp, and Solomon \cite{DHLS-2003}; Hirschfeldt and Shore \cite{HS-2007}; Jockusch, Kjos-Hanssen, Lempp, Lerman, and Solomon \cite{JKLLS-2009}; and Greenberg, Montalb\'{a}n, and Slaman \cite{GMS-ta}.

There are several approaches taken in such analyses. Most commonly, we restrict attention to computable orders and study the effectivity (or lack thereof) of particular combinatorial constructions or objects of interest. In computable model theory we might consider noncomputable orders, and inquire instead about which ones admit computable (isomorphic) copies, or more generally, in which Turing degrees copies can be found and how complicated the witnessing isomorphisms are. Finally, in reverse mathematics we formalize theorems pertaining to partial orders, and calibrate the strengths of these theorems according to which set-existence axioms are necessary to prove them. There is a fruitful interplay between these approaches, particularly the first and last, with results and insights from one often leading to results in the others. For a detailed account of this relationship, see for example \cite[Section 1]{HS-2007}. We refer the reader to Soare \cite{Soare-1987} for background in computability theory; to Ash and Knight \cite{AK-2000} for computable model theory; and to Simpson \cite{Simpson-2009} for reverse mathematics.

In this article, we look at several questions pertaining to computably enumerable (c.e.)\ and co-computably enumerable (co-c.e.)\ partial orders. Such orders, in which the relationship between a given pair of elements is not decidable but rather may only be revealed over time, arise naturally in different contexts in computability theory. For example, the inclusion order on a uniformly computable family of sets (possibly with repetition) is co-c.e., and in fact, in terms of computational complexity the two are interchangeable, as we show (Proposition \ref{prop_famequiv}). Previous work on c.e.\ partial orders was done by Case \cite{Case-1976}, who studied their extensibility to total orders, and Roy \cite{Roy-1993}, who studied which c.e.\ binary relations have computable copies.

Our interest was motivated by generalizations of the chain/antichain principle ($\mathsf{CAC}$) and the ascending/descending sequence principle ($\mathsf{ADS}$); see Section \ref{sec_ads} for definitions. In the context of reverse mathematics, where the orders one considers may be regarded as being computable, it is easy to see that $\mathsf{CAC}$ implies $\mathsf{ADS}$ over $\mathsf{RCA}_{0}$, but it is unknown whether the reverse implication holds. We show that when the principles are formulated for (formalizations of) c.e.\ and co-c.e.\ partial orders, the answer to the analogous question is no (Proposition \ref{prop_adsequiv} and Corollary \ref{cor_cac}).

We then look at the degree spectra of c.e.\ and co-c.e.\ partial orders on $\omega$, the degree spectrum of an order being the class of degrees containing a copy of it. We show (Theorem \ref{thm_cediff}) that even though there are c.e.\ partial orders with no co-c.e.\ copy, and co-c.e.\ ones with no c.e.\ copy, the degree spectra of the two classes of orders coincide (Corollary \ref{cor_equivsp}). Our main result is that the degree spectra of these partial orders are in fact universal for $\emptyset ^{\prime }$-computable structures: for every such structure (or indeed for any structure of c.e.\ degree) there is a c.e.\ (or co-c.e.)\ partial order with the same degree spectrum, and of the same degree if the structure has c.e.\ degree (Theorem \ref{thm_ceuniv}). As a corollary, we obtain a new example of the Slaman-Wehner theorem (\cite{Slaman-1998}, \cite{Wehner-1998}) that there is a structure with a copy in every nonzero degree. Specifically, it follows that such a structure can be found among the c.e.\ and co-c.e.\ partial orders in any nonzero c.e.\ degree (Corollary \ref{cor_SWce}).

\section{Preliminaries}

\label{sec_prelim}

In this paper, we are interested in countably infinite structures and (unless otherwise noted) assume that their domains are $\omega $. We
also identify orders with the binary relations defining them. Our terminology and notation will for the most part be standard. Given a partial
order $\leq _{P}$ on $\omega $ and $A,B\subseteq \omega$, we say $A \leq_P B$ when $(\forall a\in A)(\forall b\in B)[a\leq _{P}b]$, and similarly for $A <_P B$ and $A \nleq_P B$. When $A$ or $B$ is finite, we often replace it by its members, as in $A\leq_P b$ or $a \leq_P b_0,b_1$.

In referring to or building a c.e.\ partial order $\leq _{P}$ on $\omega $, we usually assume we begin with a computable partial order. We then enumerate additional pairs $\left\langle a,b\right\rangle $ into the order relation and speak of (computably) adding elements to (the graph of) $\leq _{P}$, or of setting $a\leq _{P}b$ for some $a,b\in \omega $. In the co-c.e.\ case, we again begin with some computable partial order, and then remove pairs, or set $a\nleq _{P}b$. In both cases, of course, we must take care to build a transitive relation.

In this section we prove that c.e.\ and co-c.e.\ partial orders cannot be used interchangeably. The following theorem extends Theorem 2.5 of Roy~\cite{Roy-1993} that there is a c.e.\ antisymmetric binary relation with no computable copy.

\begin{theorem}
\label{thm_cediff} There exists a co-c.e.\ partial order on $\omega $ which is not isomorphic to any c.e.\ such order, and conversely.
\end{theorem}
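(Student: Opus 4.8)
The plan is to construct the two orders by a pair of dual diagonalizations, each a strengthening of the kind of argument behind Roy's theorem; I describe the co-c.e.\ order and indicate the dual at the end. I would build a co-c.e.\ partial order $P$ as a disjoint union $\bigsqcup_{e\in\omega}B_e$ of ``blocks'' carried by disjoint computable intervals of $\omega$ and made pairwise incomparable, so that the connected components of $P$ are exactly the $B_e$; block $B_e$ is in charge of the requirement $\mathcal R_e$: \emph{if $W_e$ (the $e$-th c.e.\ set with its given enumeration) is the diagram of a partial order on $\omega$, then $P\not\cong W_e$.} Since $P$ is to be co-c.e., each $B_e$ begins, in the computable initial order, as a fixed poset $N$, and thereafter we may only delete pairs from it, i.e.\ move its isomorphism type downward through the (edge-deletion) lattice of subposets of $N$; I would take $N$ to have an infinite such lattice and arrange the book-keeping so that the isomorphism type of $P$ is recoverable as the multiset of the final types of the $B_e$ (keeping these types pairwise distinct is convenient).

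The strategy for $\mathcal R_e$ maintains a ``target'' type $\tau_e$ for $B_e$, initially that of $N$, and dovetails the enumeration of $W_e$, looking for a stage at which some finite piece of $W_e$ could, as far as that stage reveals, be completed to a copy of $P$ in which the type $\tau_e$ is realized (with the right multiplicity) among the components of $W_e$. When such a \emph{threat} surfaces, $\mathcal R_e$ deletes relations in $B_e$, moving $\tau_e$ strictly down the lattice to a currently unthreatened type. Because each action strictly descends, $\mathcal R_e$ acts only finitely often, and since distinct blocks are disjoint components no block interferes with another's relations; each $B_e$ reaches a final type. The verification then has the usual shape: if $W_e$ is not eventually a partial order, or has an infinite component, it is not $\cong P$; otherwise the final $\tau_e$ was chosen to witness a mismatch between the multiset of component types of $W_e$ and that of $P$.

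The step I would expect to be the real obstacle is making the notion of ``threat'' at once \emph{sound} and \emph{limit-decidable}. ``$W_e\cong P$'' is $\Sigma^1_1$, and even the local assertion ``$W_e$ has a full component isomorphic to the finite poset $\tau_e$'' is only $\Pi^0_1$ in the c.e.\ set $W_e$ (both ``is a full component'' and ``realizes the incomparabilities of $\tau_e$'' are $\Pi^0_1$), so we cannot simply react when a threat appears. The resolution is to approximate these $\Pi^0_1$ facts from above, to react only at the first stage where the current approximation changes its mind, to keep enough reserve types below each $\tau_e$ that $W_e$ can never ``chase'' $B_e$ back into agreement, and to verify that with these precautions each $B_e$ is altered only finitely often; this is precisely the extra work not needed in Roy's setting, where one must only defeat \emph{computable} copies and the analogous diagnostics are $\Delta^0_2$. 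Finally, the ``conversely'' half is the exact dual: build a c.e.\ partial order $\bigsqcup_e B_e$, each $B_e$ starting as an antichain and only ever acquiring new relations, diagonalizing against the $e$-th co-c.e.\ partial order $\overline{W_e}$, with the roles of $\Sigma^0_1$ and $\Pi^0_1$ interchanged in the obstacle.
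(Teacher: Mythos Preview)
Your plan correctly isolates the obstacle, but the paragraph that claims to resolve it does not. The predicate ``$W_e$ has a full component isomorphic to the finite poset $\tau_e$'' is not merely $\Pi^0_1$ in a c.e.\ order; it is $\Sigma^0_2$ (one must first guess the carrier tuple, a $\Sigma^0_1$ commitment, and then verify the $\Pi^0_1$ facts that no further comparabilities or external connections appear). Against this you have only finitely many moves: if $N$ is finite then its edge-deletion lattice is finite, so $B_e$ can descend only boundedly often, while an adversary enumerating $W_e$ has infinitely many fresh elements and can, after each of your descents, grow a brand-new candidate component matching the current $\tau_e$ and simultaneously merge away the discarded candidates by adding cross-edges. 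Your instruction to ``react only at the first stage where the current approximation changes its mind'' cuts the wrong way: if $W_e$ genuinely has a component of type $\tau$, the $\Pi^0_1$ approximation for that tuple never changes its mind, so you never react and $B_e$ settles on a type that $W_e$ in fact realizes. If instead you take $N$ infinite (as your phrase ``infinite such lattice'' suggests), the deletion lattice is no longer well-founded and ``strict descent implies finitely many actions'' simply fails. Either way, the verification that each $\mathcal{R}_e$ succeeds is missing a genuine idea, not just detail.

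The paper sidesteps this entirely by abandoning per-index diagonalization in favor of a complexity-theoretic coding. It builds one fixed co-c.e.\ order that encodes a $\Sigma^0_2$-complete set $U$: distinguished constants pick out, definably, a family of finite tuples indexed by $i$, and $i\in U$ is recorded by whether some element of a designated set lies below the $i$th tuple. The point is that in any \emph{c.e.}\ isomorphic copy, both the designated set and the ``lies below'' relation become $\Sigma^0_1$, so the decoding of $U$ would be $\Sigma^0_1$ as well, contradicting $\Sigma^0_2$-completeness. No stage-by-stage reaction to individual $W_e$ is needed, and the $\Sigma^0_2$ threat-detection problem never arises. If you want to salvage a direct diagonalization you will need a new mechanism that either bounds the number of relevant threats per $W_e$ or changes the invariant you diagonalize on; as written, the proposal does not do this.
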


\begin{proof}
We partition $\omega$ computably into the following sets:

\begin{itemize}
\item $\{a, b, c, f, l\}$;

\item $A = \{a_{i,k} : i \in \omega, k < i+1\}$;

\item $B = \{b_i : i \in \omega\}$;

\item $C = \{c_{i,k} : i \in \omega, k < i \}$.
\end{itemize}

We shall use the elements of the first of these sets to identify the elements of the others, and the relations that hold between them, in a given copy of $(\omega ,\leq _{P})$. For each $i\in \omega $, the $a_{i,k}$ for $k<i+1$ will code the number $i$ as a sequence of length $i+1$. The elements $f$ and $l$ will identify the first and last element of this sequence, and the $c_{i,k}$ for $k<i$ will identify pairs of consecutive elements.

Let $U$ be a fixed $\Sigma^0_2$-complete subset of $\omega$, and let $R$ be a computable predicate so that $U = \{i : (\exists x)(\forall y)R(i,x,y) \}$. We use $B$ to represent whether or not $U(i)$ holds, with $b_x \leq_P a_{i,0},\ldots,a_{i,i}$ if and only if we believe that $(\forall y)R(i,x,y)$ holds. Whenever we find some $y$ such that $R(i,x,y)$ does not hold, we thus set $b_x \nleq_P a_{i,0},\ldots,a_{i,i}$, so that in the end we have that $U(i)$ holds if and only if there is an $n \in B$ with $n \leq_P a_{i,0},\ldots,a_{i,i}$, namely $n = b_x$ for the least $x$ such that $(\forall y)[R(i,x,y)]$.

Formally, we build $\leq_P$ as follows. The initial setup is to remove elements from $\omega ^{2}$ so that no relations hold except for those needed for reflexivity and the following:

\begin{itemize}
\item $A <_P a$;

\item $B <_P b$;

\item $C <_P c$;

\item $B <_P A$;
\end{itemize}
and for all $i \in \omega$,

\begin{itemize}
\item $a_{i,0} <_P f$ and $a_{i,i} \leq_P l$;

\item $a_{i,k} <_P c_{i,k}$ and $a_{i,k+1} <_P c_{i,k}$ for all $k < i$.
\end{itemize}
We now proceed by stages, at each of which we remove at most finitely many elements from the graph of $\leq_P$. Associate to each $i \in \omega$ a natural number called its \emph{witness}, initially declared to be $0$. At stage $s$, consider each $i \leq s$ in turn, and suppose the witness of $i$ is $x$. If $R(i,x,y)$ holds for all $y \leq s$, do nothing. Otherwise, set $b_x \nleq_T a_{i,0},\ldots,a_{i,i}$, and redefine the witness of $i$ to be $x+1$.

The resulting order is clearly co-c.e. We also have that $A$ is the set of elements below $a$ and not below $b$, $B$ is the set of elements below $b$, and $C$ the set of elements below $c$. Hence, the images of these sets in any copy of $(\omega,\leq_P)$ are computable in that copy, and if the order in that copy is c.e.\ then so are $B$ and $C$.

Seeking a contradiction, suppose there exists a c.e.\ copy $(\omega,\leq_Q)$ of $(\omega,\leq_P)$. Identify $f$, $l$, $A$, $B$, and $C$ with their images in this copy. Given $i$, we search for elements $a_0,\ldots,a_i \in A$ such that

\begin{itemize}
\item $f \leq_Q a_0$ and $l \leq_Q a_i$;

\item for each $j < i+1$, there is an element of $C$ below both $a_{i,k}$ and $a_{i,k+1}$.
\end{itemize}
This search is computable since $\leq_Q$ and $C$ are c.e. Furthermore, the search must succeed since the images of $a_{i,0},\ldots,a_{i,i}$ would do, and these are also the only possibilities. By construction, we thus have that $i \in U$ if and only if there is an $n \in B$ with $n \leq_Q a_0,\ldots,a_i$. But this now yields a $\Sigma^0_1$ definition of $U$ since $B$ is c.e., contradicting the choice of $U$ as a $\Sigma^0_2$-complete set.

A similar argument yields a c.e.\ partial order with no co-c.e.\ copy.
\end{proof}

While we are here particularly interested in partial orders, there are other natural binary relations one could consider. For example, partial orders can be viewed as directed graphs and we could obtain the preceding theorem for graphs by virtually the same argument. These classes of relations (and others) were studied in \cite{HKSS-2002} alongside partial orders, and the same results were obtained for each. Similarly, we can recast all the results of Section \ref{spectra} about spectra of c.e.\ and co-c.e.\ structures for any of these relational structures (see the comments at the end of Section \ref{spectra}).

\section{Generalizations of $\mathsf{ADS}$ and $\mathsf{CAC}$}\label{sec_ads}

In this section, we investigate the reverse mathematical strength of several mild generalizations of the principles $\mathsf{CAC}$ and $\mathsf{ADS}$, defined below. Recall that a \emph{chain}, respectively, \emph{antichain}, for a partial order $\leq _{P}$ on an arbitrary set $S \subseteq \N$ is a subset of $S$ any two members of which are $\leq _{P}$-comparable, respectively, $\leq_{P} $-incomparable. An \emph{ascending sequence}, respectively \emph{descending sequence}, for $\leq _{P}$, is a subset of $S$ on which $\leq_{P} $ agrees with the natural order $\leq _{N}$, respectively, the reverse natural order $\geq _{N}$.

\begin{principle}[Chain/antichain principle ($\mathsf{CAC}$)]
Every partial order on $\mathbb{N}$ has either an infinite chain or an infinite antichain.
\end{principle}

\begin{principle}[Ascending/descending sequence principle ($\mathsf{ADS}$)]
Every linear order on $\mathbb{N}$ has either an infinite ascending sequence or an infinite descending sequence.
\end{principle}

The computability-theoretic and reverse mathematical content of $\mathsf{CAC}$ and $\mathsf{ADS}$ was studied by Hirschfeldt and Shore \cite{HS-2007}, who established, among other results, that both principles are strictly weaker than Ramsey's theorem for pairs ($\mathsf{RT}^2_2$) and incomparable with $\mathsf{WKL}_0$. While $\mathsf{CAC}$ implies $\mathsf{ADS}$ over $\mathsf{RCA}_0$, it is unknown whether $\mathsf{ADS}$ implies $\mathsf{CAC}$ (\cite{HS-2007}, Question 6.1). We show that the answer to the above question is no when we consider more general types of orders.

To begin, we can easily formalize the notions of c.e.\ and co-c.e.\ partial orders in $\mathsf{RCA}_{0}$: we let a c.e.\ partial order be any function whose values, viewed as pairs, satisfy reflexivity, antisymmetry, and transitivity; similarly for co-c.e.\ orders. This allows us to formulate $\mathsf{CAC}$ and $\mathsf{ADS}$ for c.e.\ and co-c.e.\ partial orders on $\mathbb{N}$. (Instances of these principles are thus no longer analogous to computable partial and linear orders, but rather to c.e.\ or co-c.e.\ ones.)

A different but related class of orders is that of inclusion orders on families of sets. For computable families, inclusion is co-c.e. But since a family of sets can in general contain repetitions, we do not necessarily obtain a co-c.e.\ partial order isomorphic to the inclusion order on a given computable family $\langle A_{i}:i\in \omega \rangle $ simply by setting $i\leq j$ if $A_{i}\subseteq A_{j}$. However, we do obviously obtain a preorder (a reflexive, transitive relation). The converse is also true:

\begin{proposition}
\label{prop_famequiv} Every co-c.e.\ preorder on $\omega$ is isomorphic to the inclusion order on a computable family of sets.
\end{proposition}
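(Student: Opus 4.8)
The plan is to start with a co-c.e.\ preorder $\preceq$ on $\omega$ and, uniformly, construct a computable sequence $\langle A_i : i \in \omega\rangle$ of sets so that $A_i \subseteq A_j$ if and only if $i \preceq j$. The natural first guess, $A_i = \{j : j \preceq i\}$, does not obviously work because $\preceq$ is only co-c.e., so this set is only $\Pi^0_1$, not computable. The fix is to use \emph{markers} or \emph{timestamps}: put into $A_i$, at stage $s$, a coded element recording the pair $\langle j, s\rangle$ for every $j$ that still appears to satisfy $j \preceq i$ at stage $s$. More precisely, fix a computable approximation $\preceq_s$ to $\preceq$ (with $\preceq_0 \supseteq \preceq_1 \supseteq \cdots$ and $\preceq = \bigcap_s \preceq_s$), and set $A_i = \{\langle j,s\rangle : j \preceq_s i\}$. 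This $A_i$ is computable uniformly in $i$: to decide whether $\langle j,s\rangle \in A_i$ we simply check whether $j \preceq_s i$, a computable question.

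The key steps are then: (1) verify $i \preceq j \Rightarrow A_i \subseteq A_j$; (2) verify $i \not\preceq j \Rightarrow A_i \not\subseteq A_j$; and (3) observe that the map $i \mapsto A_i$ is injective, so that the sequence really presents the preorder $\preceq$ itself (as an inclusion order on a \emph{family}, repetitions would be harmless, but in fact there are none if we also throw a "tag" element $t_i$ coding $i$ into $A_i$ — though we must be careful this does not break (1)). For (1): if $i \preceq j$ then by transitivity $j' \preceq i$ implies $j' \preceq j$, and since the approximation is monotone decreasing, $j' \preceq_s i$ does \emph{not} in general imply $j' \preceq_s j$ at the same stage $s$ — so this naive version fails. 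This is the main obstacle, and it forces a more careful construction.

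To get around it, I would instead define $A_i$ using a "closing-under-transitivity with delay" idea: enumerate $\langle j,s\rangle$ into $A_i$ at stage $s$ only once we have seen, for every $k$, that the relevant comparabilities have stabilized far enough; equivalently, work with the relation $j \preceq^* i$ meaning "$j \preceq_t i$ for all $t \le s$ and moreover for all $k$ with $k \preceq_s j$ we have $k \preceq_t i$ for all $t \le s$" — but the cleanest route is to reduce to the case of a \emph{partial order} first. Namely, pass to the quotient by the equivalence $i \equiv j \iff (i \preceq j \wedge j \preceq i)$; this equivalence relation is $\Pi^0_1$, and on each class the construction is trivial (make all the sets in a class equal). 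On the induced partial order of classes one can use antisymmetry to run the marker argument without the transitivity collision, since incomparable-or-equal behavior is now controlled. Handling the bookkeeping of the $\Pi^0_1$ equivalence classes computably (we cannot compute the classes, only approximate them) is where the real work lies, and I expect that to be the crux of the proof; everything else is routine verification that the resulting family is computable and that inclusion matches $\preceq$.
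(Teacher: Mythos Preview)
You have correctly located the obstacle: the naive marker construction $A_i = \{\langle j,s\rangle : j \preceq_s i\}$ fails because the stage-$s$ approximation $\preceq_s$ need not be transitive, so $j' \preceq_s i$ together with $i \preceq j$ does not yield $j' \preceq_s j$. But your proposed fix does not work. Passing to the quotient partial order does nothing for this problem: antisymmetry is irrelevant, since the failure is purely about \emph{transitivity} of the finite-stage approximation, and that failure persists verbatim for partial orders. Moreover, the equivalence relation you want to quotient by is only $\Pi^0_1$, so you cannot computably choose representatives or decide when two indices lie in the same class; you acknowledge this and call it ``where the real work lies,'' but the detour only adds complications without touching the actual gap.

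The missing idea is much simpler. Because $\preceq$ is a preorder, its restriction to each finite set $\{0,\ldots,s\}$ is transitive, and since the co-c.e.\ approximation eventually stabilizes on any finite set, one can \emph{speed up} the enumeration so that $\preceq_s$ is already transitive on $\{0,\ldots,s\}$ at every stage $s$ (e.g.\ replace $\preceq_s$ by $\preceq_{f(s)}$ where $f(s)$ is the least $t\geq s$ with $\preceq_t$ transitive on $\{0,\ldots,s\}$). With that single normalization the obstacle vanishes: for $s \geq \max\{i,j,j'\}$ and $i \preceq j$, from $j' \preceq_s i$ and $i \preceq_s j$ one gets $j' \preceq_s j$ by transitivity on $\{0,\ldots,s\}$. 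The paper then builds the $A_i$ stagewise---initializing $A_s$ by copying the current $A_i$ for each $i\leq s$ with $i \preceq_s s$, and for every pair $i,j\leq s$ with $i \not\preceq_s j$ adding a fresh witness $n$ to exactly those $A_k$ with $i \preceq_s k$---rather than using your literal marker set, but once the speedup is in place either style can be made to work with minor bookkeeping. Isolate the speedup as the key step; without it, no amount of quotienting closes the gap.
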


\begin{proof}
Fix a co-c.e.\ preorder $(\omega ,\leq _{P})$, along with a computable enumeration of its complement. Write $i\nleq _{P,s}j$ and $i\leq _{P,s}j$ depending on whether the pair $\langle i,j\rangle $ has or has not been enumerated by stage $s$, and assume our enumeration has been speeded up, if necessary, to ensure that the relation $\leq _{P,s}$ is transitive on $\omega \mathbin{\upharpoonright}s+1$. We define $\langle A_{i}:i\in \mathbb{N}\rangle $ by stages, defining the $A_{i}$ for $i\leq s$ on a common initial segment of $\omega $ at stage $s$. The isomorphism will be given by $i\mapsto A_{i}$.

At stage $s$, add to $A_{s}$ all elements of $A_{i}$ for all $i\leq s$ with $i\leq _{P,s}s$. Then, consider any $i,j\leq s$ with $i\nleq _{P,s}j$, and let $n$ be the least number at which no set has yet been defined. For each $k\leq s$, add $n$ to $A_{k}$ if $i\leq _{P,s}k$, and to the complement of $A_{k}$ otherwise.

The family $\langle A_i: i \in \mathbb{N} \rangle$ is computable. It is also clear that if $i \nleq_P j$ then $A_i \nsubseteq A_j$, as some number will be put into $A_i - A_j$ at a sufficiently large stage. Now suppose $i \leq_P j$, so that $i \leq_{P,s} j$ for all $s$. By our assumption about $\leq_{P,s} $, for any stage $s \geq \max\{i,j\}$ and any $k \leq s$, we will necessarily have $k \leq_{P,s} j$ whenever $k \leq_{P,s} i$. Thus, any number put into $A_i$ at such a stage will also belong to $A_j$. Since numbers can only be put into $A_i$ at a stage $s \geq i$, it thus follows that if $j < i$ then $A_i \subseteq A_j$. On the other hand, if $i < j$, then all elements of $A_i$ are put into $A_j$ at stage $s = j$, so we reach the same conclusion.
\end{proof}

We can formalize c.e.\ and co-c.e.\ preorders on $\mathbb{N}$ in $\mathsf{RCA}_{0}$ just as we did partial orders, and then consider formulations of $\mathsf{CAC}$ and $\mathsf{ADS}$ for them as well. We can also look at versions for actual preorders, that is, those given as relations rather than graphs of functions. While we could formulate versions of $\mathsf{CAC}$ and $\mathsf{ADS}$ directly for families of sets under inclusion, this would be unnecessary by the preceding proposition. (Note that the proof of that proposition can be carried out in $\mathsf{RCA}_{0}$.)

Each of the generalized principles can easily be seen to imply the corresponding original one. In the case of $\mathsf{ADS}$, and in the case of $\mathsf{CAC}$ for preorders on $\mathbb{N}$, we also obtain reversals:

\begin{proposition}
\label{prop_adsequiv} Over $\mathsf{RCA}_{0}$, the following are equivalent:

\begin{enumerate}
\item $\mathsf{ADS}$;

\item $\mathsf{ADS}$ for c.e.\ partial orders on $\mathbb{N}$;

\item $\mathsf{ADS}$ for co-c.e.\ partial orders on $\mathbb{N}$;

\item $\mathsf{ADS}$ for c.e.\ preorders on $\mathbb{N}$;

\item $\mathsf{ADS}$ for co-c.e.\ preorders on $\mathbb{N}$;

\item $\mathsf{ADS}$ for preorders on $\mathbb{N}$.
\end{enumerate}

In addition, $\mathsf{CAC}$ for preorders on $\mathbb{N}$ is equivalent to $%
\mathsf{CAC}$.
\end{proposition}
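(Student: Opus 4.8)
The plan is to use $\mathsf{ADS}$ (item (1)) as a hub: I will show that each of (2)--(6) implies (1) and, conversely, that (1) implies each of (2)--(6); the statement about $\mathsf{CAC}$ is then handled separately by an explicit construction. For ``(k) $\Rightarrow$ (1)'' with $k \in \{2,\dots,6\}$, start from a linear order $\leq_L$ on $\mathbb{N}$ given as a set. It is in particular a preorder on $\mathbb{N}$, which handles (6). Enumerating its graph presents it as a c.e.\ partial order and a c.e.\ preorder, handling (2) and (4); enumerating the complement of its graph presents it as a co-c.e.\ partial order and a co-c.e.\ preorder, handling (3) and (5). Both enumerations are available in $\mathsf{RCA}_0$ because the graph of a linear order on $\mathbb{N}$ and its complement in $\mathbb{N}^2$ are each infinite. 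In every case the presented object is (a presentation of) the linear order $\leq_L$, so the corresponding principle supplies an infinite ascending or descending sequence for it.

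For the implications ``(1) $\Rightarrow$ (k)'', the one idea that matters is that a c.e.\ or co-c.e.\ order that is a \emph{linear} order has a $\Delta^0_1$ graph, and hence exists as a set by $\Delta^0_1$-comprehension. Concretely, if $\leq_P$ is a c.e.\ partial order (or preorder) on $\mathbb{N}$ that is a linear order, then antisymmetry and totality give, for $a \neq b$, the equivalence $a \not\leq_P b \iff b \leq_P a$; thus the complement of the graph of $\leq_P$ is $\Sigma^0_1$, and since the graph itself is $\Sigma^0_1$ it is $\Delta^0_1$. The co-c.e.\ case is symmetric, using $a \leq_P b \iff b \not\leq_P a$ for $a \neq b$ and interchanging the roles of $\Sigma^0_1$ and $\Pi^0_1$. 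For (6) nothing is needed, since a preorder on $\mathbb{N}$ that is a linear order is literally a linear order given as a set. In each case $\mathsf{ADS}$ then applies to the resulting honest linear order, and because it has the same graph as $\leq_P$, an ascending or descending sequence for it is one for $\leq_P$.

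For $\mathsf{CAC}$: one direction is immediate, since a partial order is a preorder, so $\mathsf{CAC}$ for preorders applies in particular to partial orders on $\mathbb{N}$. For the converse, given a preorder $\leq_P$ on $\mathbb{N}$, I would pass to the partial order $\leq_Q$ on $\mathbb{N}$ defined by
\[ i \leq_Q j \iff i \leq_P j \ \wedge\ \bigl( j \not\leq_P i \ \vee\ i \leq_{\mathbb{N}} j \bigr), \]
which exists by $\Delta^0_1$-comprehension (indeed it is quantifier-free in $\leq_P$). One checks that $\leq_Q$ is reflexive, antisymmetric, and transitive; the only nontrivial case of transitivity is when $i \leq_Q j \leq_Q k$ forces $i$, $j$, $k$ into a common $\leq_P$-equivalence class (writing $i \equiv_P j$ for $i \leq_P j \wedge j \leq_P i$), and there one uses that $\leq_Q$ restricted to a single $\equiv_P$-class coincides with $\leq_{\mathbb{N}}$; moreover, on elements of distinct $\equiv_P$-classes, $\leq_Q$ agrees with $\leq_P$. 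Now apply $\mathsf{CAC}$ to $\leq_Q$. An infinite $\leq_Q$-chain is an infinite $\leq_P$-chain, since $i \leq_Q j$ implies $i \leq_P j$. An infinite $\leq_Q$-antichain meets each $\equiv_P$-class in at most one point (as $\leq_Q$ is total on each class) and hence, by the agreement on distinct classes, is an infinite $\leq_P$-antichain. So $\leq_P$ has an infinite chain or antichain.

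The bookkeeping --- that the various presentations and the relation $\leq_Q$ are legitimate objects of $\mathsf{RCA}_0$, and that chains, antichains, and ascending/descending sequences transfer between a presentation and its graph --- is routine. The one step that takes real care is the definition of $\leq_Q$: it must at once be a partial (not merely pre-) order definable from $\leq_P$ without quantifiers (so that no case split on the finiteness of the $\equiv_P$-classes, which could leave $\mathsf{RCA}_0$, is required), send each infinite $\equiv_P$-class to an infinite $\leq_Q$-chain (so $\mathsf{CAC}$ cannot produce a $\leq_Q$-antichain living inside one class), and reproduce the quotient order across classes (so $\leq_Q$-antichains are genuine $\leq_P$-antichains). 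Arranging all three with a single definition is the crux.
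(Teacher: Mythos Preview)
Your argument for (2) and (3), and for the easy directions $(k)\Rightarrow(1)$, is correct and matches the paper. The problem is your reading of (4), (5), and (6). The instances of ``$\mathsf{ADS}$ for preorders'' are not preorders that happen to be linear orders; they are \emph{linear preorders}, i.e., total preorders---reflexive, transitive, and satisfying $\forall i,j\,(i\leq_P j \vee j\leq_P i)$, but not necessarily antisymmetric. So you may well have $a\neq b$ with both $a\leq_P b$ and $b\leq_P a$, and your key equivalence $a\not\leq_P b\iff b\leq_P a$ fails. (Your sentence ``for (6) nothing is needed, since a preorder on $\mathbb{N}$ that is a linear order is literally a linear order'' makes the misreading explicit: under it, (6) would collapse to (1) by definition, and (4), (5) to (2), (3).) The paper handles this by writing $i\sim_P j$ for $i\leq_P j\wedge j\leq_P i$ and splitting into two cases. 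If there are only finitely many $\sim_P$-classes, then each class exists by $\Delta^0_1$ comprehension and one of them is infinite by $\mathsf{RT}^1$ (a consequence of $\mathsf{ADS}$); that class is simultaneously an ascending and a descending sequence. If there are infinitely many classes, one recursively builds an infinite transversal $R$, pulls the restriction of $\leq_P$ to $R$ back to a genuine linear order on $\mathbb{N}$, and applies $\mathsf{ADS}$ there.

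Your $\mathsf{CAC}$ argument, on the other hand, is correct and takes a genuinely different route from the paper, which just says ``a similar argument'' (meaning the same case split on the number of $\sim_P$-classes). Your single $\Delta^0_0$ definition $i\leq_Q j \iff i\leq_P j\wedge(j\not\leq_P i\vee i\leq_{\mathbb N} j)$ sidesteps the case analysis entirely: it linearly orders each $\sim_P$-class by $\leq_{\mathbb N}$ and reproduces the quotient order across classes, so $\leq_Q$-chains and $\leq_Q$-antichains transfer back to $\leq_P$ as you describe. In fact, the same $\leq_Q$ would also give a direct proof of $(1)\Rightarrow(6)$ once you adopt the correct reading of (6), since when $\leq_P$ is total so is $\leq_Q$, and then $\leq_Q$ is a linear order on $\mathbb{N}$ to which $\mathsf{ADS}$ applies. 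It is only for the c.e.\ and co-c.e.\ cases (4) and (5) that this shortcut is unavailable, because forming $\leq_Q$ requires both $\leq_P$ and $\not\leq_P$.
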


\begin{proof}
That (1) is implied by each of the other statements is clear. To see that (1) implies (2), note that a c.e.\ linear order on $\mathbb{N}$ must, for all distinct pairs $i,j \in \mathbb{N}$, have precisely one of $\langle i,j \rangle$ or $\langle j,i \rangle$ in its range. Thus, its range exists by $\Delta^0_1$ comprehension and is therefore a linear order that we may apply $\mathsf{ADS}$ to. An ascending or descending sequence for this order is such a sequence for the original c.e.\ order. The same argument shows that (1) implies (3).

Each of (4) and (5) implies (6), so it remains only to show that (1) implies (4) and (5). We prove the former, the proof of the latter being similar. So let a c.e.\ linear preorder be given. Formally, this is a function, but we denote it by $\leq_P$ and write $i \leq_P j$ to mean that $\langle i,j
\rangle$ is in the range. We write $i \sim_P j$ if $i \leq_P j$ and $j \leq_P i$. There are two cases to consider.

First, suppose there exist $i_0, \ldots, i_{n-1} \in \omega$ such that $i_j \not\sim_P i_k$ for all distinct $j, k < n$, and for every $i$ there is a $j
< n$ such that $i \sim_P i_j$. Then for each $j < n$, the set $I_j = \{ i \in \mathbb{N}: i \sim_P j \}$ exists by $\Delta^0_1$ comprehension. By the infinitary pigeonhole principle ($\mathsf{RT}^1$), which follows from $\mathsf{ADS}$ (\cite[Proposition 4.5]{HS-2007}), we may fix a $j < n$ such that $I_j$ is infinite. Then this is an ascending (and also descending) sequence for $\leq_P$.

Now suppose no such $i_{0},\ldots ,i_{n-1}$ as above exist. Then for every finite set $F$ there exists an $i>\max F$ such that $i\not\sim _{P}j$ for all $j\in F$, and, in fact, there is a function $g$, total by assumption, that assigns to each $F$ the least such $i$. We now define by primitive recursion a function $f:\mathbb{N}\rightarrow \mathbb{N}$ such that $f(i)=g(f\mathbin{\upharpoonright}i)$ for all $i\in \mathbb{N}$. Let $R$ be the range of $f$, which exists because $f$ is strictly increasing and is thus also infinite. For all $i,j\in R$ we have $i<_{P}j$ or $j<_{P}i$. If we write the elements of $R$ as $r_{0}<r_{1}<\cdots $, we can define a linear order $<_{Q} $ on $\mathbb{N}$ by $i<_{Q}j$ if and only if $r_{i}<_{P}r_{j}$, which exists by $\Delta _{1}^{0}$ comprehension. If $S$ is any infinite ascending or descending sequence for $(\mathbb{N},\leq _{Q})$ as given by $\mathsf{ADS}$, then $\{r_{i}:i\in S\}$ is such a sequence for $(\mathbb{N},\leq _{P})$.

A similar argument establishes the equivalence of $\mathsf{CAC}$ for preorders on $\mathbb{N}$ with $\mathsf{CAC}$.
\end{proof}

In contrast to the preceding theorem it follows by Theorem \ref{thm_cediff} that there are c.e.\ and co-c.e.\ partial orders with no computable copies. This also follows from the next theorem, as computable instances of $\mathsf{CAC}$ always have solutions that do not compute $\emptyset ^{\prime }$ (see Diagram~1 in \cite{HS-2007}).

\begin{theorem}
\ 

\begin{enumerate}
\item There exists a co-c.e.\ partial order on $\omega $ with no infinite antichains and with all chains computing $\emptyset ^{\prime }$.

\item There exists a c.e.\ partial order on $\omega $ with no infinite chains and with all infinite antichains computing $\emptyset ^{\prime }$.
\end{enumerate}
\end{theorem}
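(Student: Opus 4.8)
The plan is to build each order as a computable partial order that is then thinned (for the co-c.e.\ case) or augmented (for the c.e.\ case) according to an enumeration of $\emptyset'$, so that the combinatorial structure of the infinite chains (respectively antichains) is forced to track the $\emptyset'$-enumeration. For part (1), I would start from a computable linear-order-like skeleton: arrange $\omega$ into columns $D_i = \{d_{i,k} : k \in \omega\}$, one per requirement, with the elements of each column initially forming an ascending $\omega$-chain, and with $D_i <_P D_j$ whenever $i < j$, so that initially the whole order is (isomorphic to) $\omega \cdot \omega$, which is linear and hence has no infinite antichain. Whenever $i$ enters $\emptyset'$ at stage $s$, I remove enough pairs to sever $D_i$ from the columns below it at a level depending on $s$, i.e.\ I set $d_{i,0} \nleq_P d_{j,k}$ for appropriate $j < i$; the point is to do this in a way that keeps the order a partial order (and co-c.e.) but records, in the comparability pattern around column $D_i$, the stage at which $i$ was enumerated. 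An infinite chain must pass through infinitely many columns (each column is finite after the damage, or is cofinally reattached only if $i \notin \emptyset'$), and by reading off from the chain the level at which it could first enter column $D_i$ one recovers whether and when $i \in \emptyset'$; so every infinite chain computes $\emptyset'$. The absence of infinite antichains should be arranged by never making two elements within the ``active'' part incomparable except transiently, so that any infinite set contains an infinite chain.

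For part (2) I would dualize: now I want no infinite chains but infinite antichains that code $\emptyset'$. Start with a computable partial order that is a disjoint union of finite pieces — e.g.\ for each $i$ a ``fan'' $F_i$ consisting of incomparable elements $e_{i,0}, e_{i,1}, \dots$ together with marker elements, where initially the $F_i$ are pairwise incomparable, giving an order that is just an infinite antichain of fans and hence has no infinite chain. When $i$ enters $\emptyset'$ at stage $s$ I computably add pairs making certain elements of $F_i$ comparable, in a pattern whose shape encodes $s$, while being careful that the added comparabilities never create an infinite chain (each $F_i$ stays finite in the sense that its comparability relation is bounded, or: chains stay confined to a single fan and are finite there). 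Any infinite antichain then must select, for cofinally many $i$, an element of $F_i$ that is incomparable to its other chosen elements; which elements of $F_i$ remain available as antichain members depends on whether and when $i$ entered $\emptyset'$, and this dependence is what lets the antichain compute $\emptyset'$. One has to check the resulting relation is genuinely a c.e.\ partial order: transitivity is the only real constraint, and by enumerating carefully (adding all pairs forced by transitivity at the same stage) it is maintained.

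The step I expect to be the main obstacle is designing the ``coding gadget'' so that all three demands are met simultaneously: (a) the relation stays a partial order throughout the construction (the transitivity bookkeeping when removing or adding pairs is delicate, since a single removal/addition can have cascading consequences); (b) there genuinely are \emph{no} infinite antichains (resp.\ chains), which requires the skeleton to be globally linear-ish (resp.\ globally an antichain of finite pieces) in a way that survives the damage; and (c) every infinite chain (resp.\ antichain) actually \emph{does} compute $\emptyset'$ — it is not enough that the construction uses $\emptyset'$, one must verify that an arbitrary solution, not just a ``canonical'' one, is forced to reveal the full jump. The cleanest way to get (c) is to make the coding \emph{local and mandatory}: for each $i$, there should be a finite, computably-locatable ``place to look'' in the order such that the local isomorphism type there determines $i \in \emptyset'$ (and if so, the enumeration stage, via a bounded search that the solution makes effective), and such that every infinite chain/antichain is forced to interact with that place for every $i$. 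I would also verify at the end that the co-c.e.\ order in (1) has no computable copy and similarly for (2), recovering the remark preceding the theorem, though this is immediate once all chains/antichains compute $\emptyset'$ and the order is infinite.
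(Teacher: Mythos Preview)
Your outline has the right high-level shape, but the specific gadgets fail for the same reason in both parts. In part~(1) you make each column $D_i=\{d_{i,k}:k\in\omega\}$ an \emph{infinite} $\omega$-chain and only damage it when $i$ enters $\emptyset'$; hence for any $i\notin\emptyset'$ the column $D_i$ is never touched and is itself a computable infinite chain inside $\leq_P$, already defeating the conclusion that every infinite chain computes $\emptyset'$. In part~(2) each fan $F_i=\{e_{i,0},e_{i,1},\ldots\}$ is an infinite antichain and is only modified when $i$ enters $\emptyset'$; for $i\notin\emptyset'$ the fan $F_i$ remains a computable infinite antichain. So in both cases the key requirement is violated by a trivial computable witness living entirely inside a single untouched gadget.

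There is a second, related problem with the coding. You index gadgets by individual elements $i$ and hope the solution's interaction with gadget~$i$ reveals whether $i\in\emptyset'$. But an arbitrary chain or antichain need only meet each gadget once (or, in your part~(2), any single element of $F_i$ is compatible with an antichain regardless of what happened inside $F_i$), and if $i\notin\emptyset'$ that single visit carries no certification that $i$ never enters. The coding is not ``mandatory'' in the sense you correctly identify as essential at the end of your proposal.

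The paper sidesteps both issues by abandoning per-$i$ gadgets and instead coding the \emph{modulus} of $\emptyset'$ directly into the natural order on $\omega$. For part~(1) one starts from $\leq$ on $\omega$ itself, and whenever some $n$ enters $\emptyset'$ at stage $s$ one destroys all comparabilities inside the interval $[n{+}1,s]$ (setting $j\nleq_P k$ for all $n<j<k\leq s$). The result is co-c.e., and $j\leq_P k$ (for $j<k$) holds exactly when $\emptyset'\upharpoonright j$ has settled by stage $k$. There is no infinite antichain because every $i$ is $\leq_P$-below every $j\geq t_i$ (the settling time of $\emptyset'\upharpoonright i$); and any infinite chain $c_0<_P c_1<_P\cdots$ dominates the settling-time function, since $c_i\leq_P c_{i+1}$ forces $c_{i+1}\geq t_{c_i}\geq t_i$, whence $\emptyset'\upharpoonright i=\emptyset'_{c_{i+1}}\upharpoonright i$ and the chain computes $\emptyset'$. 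Part~(2) is the straightforward dual. The conceptual point you are missing is that a chain through this damaged natural order is precisely a sequence of stages witnessing convergence of the approximation to $\emptyset'$, which is why \emph{every} chain succeeds, not just a canonical one.
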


\begin{proof}
To prove (1), fix a computable enumeration $\langle \emptyset _{s}^{\prime}:s\in \omega \rangle $ of $\emptyset ^{\prime }$, and let $t_{i}$ be the least $s$ such that $\emptyset _{s}^{\prime }\mathbin{\upharpoonright}i=\emptyset ^{\prime }\mathbin{\upharpoonright}i$. We build a co-c.e.\ partial order $\leq _{P}$ by stages. To begin, we make $\leq _{P}$ agree with the natural order on $\omega $.

At stage $0$, we do nothing. At stage $s>0$, we consider consecutive substages $i\leq s$. At substage $i$, if no number enters $\emptyset^{\prime }\mathbin{\upharpoonright}i$ at stage $s$, we do nothing and go either to substage $i+1$ or to stage $s+1$, depending on whether $i<s$ or $i=s$. Otherwise, for all $j,k$ with $i\leq j<k\leq s$, we make $j\nleq _{P}k$ and go to stage $s+1$.

It is easily seen that $\leq _{P}$ is a co-c.e., reflexive, antisymmetric relation, while its transitivity follows from the fact that $i\leq _{P}j$ implies $i\leq j$, and that if $i\nleq _{P}k$ for some $k>i$ then necessarily $i\nleq _{P}j$ for all $j$ with $i<j\leq k$. So $\leq _{P}$ is a partial order, and we note that it admits no infinite antichain since for every $i$, $i\leq _{P}j$ for all $j\geq t_{i}$.

Suppose now that $C = \{c_0 < c_1 < \cdots \}$ is an infinite chain for $\leq_P$, so that $c_0 <_P c_1 <_P \cdots$. In particular, since $i \leq c_i$, we must have $c_{i+1} \geq t_i$ for all $i$ by construction. Thus for every $i$ we have $\emptyset^{\prime}\mathbin{\upharpoonright} i =\emptyset^{\prime}_{c_{i+1}} \mathbin{\upharpoonright} i$, whence it follows that $\emptyset^{\prime}\leq_T C$, as desired.

The argument for (2) is analogous.
\end{proof}

Formalizing the previous argument immediately yields the following (note that (4) and (5) are included since all partial orders are preorders):

\begin{corollary}
\label{cor_cac}Over $\mathsf{RCA}_{0}$, the following are equivalent:

\begin{enumerate}
\item $\mathsf{ACA}_0$;

\item $\mathsf{CAC}$ for c.e.\ partial orders on $\mathbb{N}$;

\item $\mathsf{CAC}$ for co-c.e.\ partial orders on $\mathbb{N}$;

\item $\mathsf{CAC}$ for c.e.\ preorders on $\mathbb{N}$;

\item $\mathsf{CAC}$ for co-c.e.\ preorders on $\mathbb{N}$.
\end{enumerate}
\end{corollary}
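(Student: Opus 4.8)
The plan is to obtain the corollary as a formalization of the preceding theorem, using the standard characterization of $\mathsf{ACA}_0$ over $\mathsf{RCA}_0$ as the statement that the range of every injection $h\colon\mathbb{N}\to\mathbb{N}$ exists.

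For the implications from $\mathsf{ACA}_0$: given a function $F$ coding a c.e.\ (respectively, co-c.e.)\ partial order or preorder, $\mathsf{ACA}_0$ forms $\ran(F)$ by arithmetical comprehension, and with it the genuine relation $\leq_P$ that $\ran(F)$ --- or its complement --- defines. One then applies $\mathsf{CAC}$, or $\mathsf{CAC}$ for preorders; both are provable in $\mathsf{ACA}_0$, the latter because it is equivalent to $\mathsf{CAC}$ by Proposition~\ref{prop_adsequiv} and $\mathsf{CAC}$ follows from $\mathsf{RT}^2_2$, which is provable in $\mathsf{ACA}_0$ (see also \cite{HS-2007}). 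An infinite chain or antichain for $\leq_P$ is one for the order coded by $F$, so $\mathsf{ACA}_0$ implies each of (2)--(5).

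For the reversals, every partial order is a preorder, so (4) implies (2) and (5) implies (3), and it suffices to derive $\mathsf{ACA}_0$ from (2) and from (3). Fix an injection $h$. Running the construction in the proof of the preceding theorem with $\ran(h)$ in place of $\emptyset'$ --- so that $\{h(t):t\le s\}$ plays the role of $\emptyset'_s$, and ``some number enters $\emptyset'\res i$ at stage $s$'' is replaced by ``$h(s)<i$'' --- produces (primitive-recursively in $h$) a co-c.e.\ partial order $\leq_P$ refining the natural order, whose transitivity is verified exactly as before and is an arithmetical consequence of properties of $h$. The order has no infinite antichain: for each $i$ the set $\{s:h(s)<i\}$ is $\Delta^0_1$ and, by injectivity of $h$, has at most $i$ elements, hence is bounded by some $t_i$ (using the finite pigeonhole principle, available from $\mathsf{I}\Sigma^0_1$), and then $i\leq_P j$ for all $j\ge t_i$, so in any infinite set $X$ the least element is $\leq_P$-comparable to all sufficiently large members of $X$. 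Applying $\mathsf{CAC}$ for co-c.e.\ partial orders therefore yields an infinite chain $C=\{c_0<c_1<\cdots\}$, and since $\leq_P$ refines the natural order we have $c_0<_P c_1<_P\cdots$ with $c_i\ge i$, whence $c_i<_P c_{i+1}$ forces that no $s\ge c_{i+1}$ satisfies $h(s)<i$. Consequently $n\in\ran(h)$ if and only if $(\exists s<c_{n+2})[\,h(s)=n\,]$, a condition with a bound computable from $C\oplus h$, so $\ran(h)$ exists by $\Delta^0_1$ comprehension and $\mathsf{ACA}_0$ follows. The derivation of $\mathsf{ACA}_0$ from (2) is dual, using the c.e.\ partial order of the preceding theorem (again with $\ran(h)$ for $\emptyset'$), which has no infinite chain and all of whose infinite antichains compute $\ran(h)$.

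The one point that needs care is keeping all of this inside $\mathsf{RCA}_0$, that is, within $\mathsf{I}\Sigma^0_1$. In particular the bound $t_i$ exists for each individual $i$ by finite pigeonhole, but --- as it must be, lest $\ran(h)$ already exist --- the function $i\mapsto t_i$ is not itself available; enough of it is extracted only after the chain (or antichain) $C$ has been fixed. With that caveat, everything --- the transitivity of $\leq_P$, its refining the natural order, and the recovery of $\ran(h)$ from $C$ --- is a line-by-line transcription of the computable argument, which is why the corollary follows immediately.
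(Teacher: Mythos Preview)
Your proposal is correct and follows exactly the route the paper intends: the corollary is obtained by formalizing the preceding theorem (replacing $\emptyset'$ by the range of an arbitrary injection $h$) together with the observation that partial orders are preorders for the implications $(4)\Rightarrow(2)$ and $(5)\Rightarrow(3)$. Your write-up in fact supplies more detail than the paper, which simply says ``formalizing the previous argument immediately yields'' the result; in particular, your remark that the function $i\mapsto t_i$ is not available in $\mathsf{RCA}_0$ but that the chain $C$ supplies the needed bounds a posteriori is exactly the point one must check, and your bound $c_{n+2}$ is correct.
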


\noindent Together with Proposition \ref{prop_adsequiv}, this provides a separation between the c.e.\ and co-c.e.\ forms of $\mathsf{CAC}$ on the one hand, and the c.e.\ and co-c.e.\ forms of $\mathsf{ADS}$ on the other.

\section{Degree spectra}\label{spectra}

The \emph{degree spectrum} of a countable structure $\mathcal{S}$ is the set of Turing degrees of copies of $\mathcal{S}$. The study of degree spectra, and in particular, of which classes of degrees can be realized as spectra, has been the subject of many investigations in computable model theory. A partial survey of previous results appears in Section 1 of \cite{HKSS-2002}.

Every structure $\mathcal{S}$ we consider below will be assumed to be in a computable language, with computable signature. We also assume that $
\mathcal{S}$ is automorphically nontrivial (meaning there is no finite $F\subseteq |\mathcal{S}|$ such that each permutation of $|\mathcal{S}|$ fixing $F$ is an automorphism), as trivial structures are structurally uninteresting. By Knight's theorem (\cite{Knight-1986}, Theorem 4.1), the degree spectrum of any $\mathcal{S}$ we consider is therefore closed upwards, and thus also equal to the set of degrees of sets that compute a copy of $\mathcal{S}$.

In this section, we show that the degree spectra of c.e.\ and co-c.e.\ partial orders are universal for $\emptyset ^{\prime }$-computable structures. In contrast with Theorem \ref{thm_cediff}, this also shows that the classes of degree spectra of c.e.\ and co-c.e.\ partial orders are the same.

We begin with a reduction of arbitrary structures to graphs (symmetric, irreflexive binary relations). As with orders, we identify graphs on $\omega$ with their relations.

\begin{lemma}
\label{lem_binrel} For every structure $\mathcal{S}$ with domain $\omega$ there exists a graph $R$ on $\omega $ such that $R \equiv _{T}\mathcal{S}$ and $R$ and $\mathcal{S}$ have the same degree spectrum.
\end{lemma}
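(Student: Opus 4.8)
The plan is to give the standard coding of an arbitrary relational structure by a graph, following the classical constructions of Marker, Hirschfeldt--Khoussainov--Shore--Slinko \cite{HKSS-2002}, and then to verify carefully that the coding is Turing-degree preserving in both directions and preserves degree spectra. First I would reduce to the case where $\mathcal{S}$ consists of finitely or countably many relations of fixed arities; a function symbol is replaced by the graph of the function (a relation of one higher arity), constants by unary relations, and a single $k$-ary relation symbol suffices to be handled, the general case being a uniform disjoint amalgamation of the codes for each symbol together with extra ``tagging'' vertices that tell the gadgets apart. The point of passing through a single relation is purely notational: once we can code one $k$-ary relation $P \subseteq \omega^k$ faithfully by a graph, we can code a whole signature by taking a disjoint union of such gadgets joined to a rigid ``backbone'' that identifies the original domain elements and the role of each gadget.

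The heart of the construction is the coding of a $k$-ary relation. For each $n \in \omega$ I attach to the domain vertex $n$ a private, rigidly identifiable gadget (for instance, a vertex from which hangs a path of length depending computably on $n$, or more robustly a configuration of triangles and pendant vertices as in \cite{HKSS-2002}) so that from the bare graph one can computably recover which vertex represents domain element $n$; crucially these gadgets are chosen pairwise non-isomorphic and asymmetric so that any isomorphism of the coded graphs must respect the labelling. Then for each tuple $\bar{a} = (a_1,\dots,a_k) \in \omega^k$ I add a ``tuple gadget'' — a fresh cluster of vertices — with $k$ distinguished vertices attached respectively to the gadgets of $a_1,\dots,a_k$ (the order of the coordinates being recorded by yet another pendant-path device of length $1,2,\dots,k$), and I include in this cluster an extra ``flag'' vertex that is present exactly when $P(\bar a)$ holds. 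Every vertex of the ambient graph lies in exactly one of these identifiable pieces, so the whole graph is computable from $\mathcal{S}$ (enumerate the domain gadgets and, for each tuple, consult $P$ to decide whether to add the flag), giving $R \leq_T \mathcal{S}$; conversely, from $R$ one computably locates the domain gadgets, reads off the labelling $n \mapsto$ its gadget, and for each tuple gadget reads off which coordinates it connects and whether its flag is present, thereby deciding $P$, giving $\mathcal{S} \leq_T R$. The same decoding procedure is uniform, so it applies verbatim to \emph{any} copy of $R$: given a copy $R'$ of $R$ in some degree $\mathbf{d}$, one computably (in $R'$) extracts the induced isomorphic copy of $\mathcal{S}$, showing $\Spec(\mathcal{S}) \subseteq \Spec(R)$; and given a copy of $\mathcal{S}$ in $\mathbf{d}$ the construction itself is a $\mathbf{d}$-computable procedure producing a copy of $R$, showing $\Spec(R) \subseteq \Spec(\mathcal{S})$. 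Hence the two spectra coincide. (One should note that the gadgets must be chosen so that $R$ is automorphically nontrivial and so that the rigidity is genuine — this is why one uses asymmetric tree-like gadgets rather than, say, single pendant vertices.)

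The main obstacle I expect is not any single clever idea but the bookkeeping needed to make the rigidity airtight: one must ensure that no ``accidental'' isomorphism can permute distinct domain gadgets, swap coordinate roles within a tuple gadget, or mistake a tuple gadget for a domain gadget, and that the structure is automorphically nontrivial so that Knight's theorem applies and the spectrum is upward closed as asserted in the section preamble. This is handled by the usual trick of choosing all the auxiliary gadgets to be pairwise non-isomorphic connected asymmetric graphs with pairwise distinct ``sizes'' (e.g. using the computable injection $n \mapsto$ a specific rigid tree of $n$ vertices, and $i \mapsto$ a rigid tree of, say, size coded to avoid collisions with the domain-gadget sizes), so that the partition of $V(R)$ into gadget-types, and within the tuple gadgets the coordinate order, is recoverable by a first-order (indeed computable) procedure invariant under isomorphism. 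A secondary, routine obstacle is verifying that the arity of a function symbol or the presence of constants does not break anything — but as noted these reduce immediately to the relational case. Since all of this machinery is entirely standard (it is exactly the reduction used to transfer spectrum results among relational structures), I would present the construction in enough detail to fix the gadgets and the decoding, state that $R \equiv_T \mathcal{S}$ and $\Spec(R) = \Spec(\mathcal{S})$ follow by the uniform decoding argument above, and refer to \cite{HKSS-2002} for the verification that the gadgets are rigid and non-isomorphic.
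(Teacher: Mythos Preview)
Your proposal is correct and matches the paper's approach: the paper simply cites the folklore construction appearing in \cite[Appendix A]{HKSS-2002}, and what you have sketched is precisely that construction, with the standard rigid domain-gadgets, tuple-gadgets with coordinate tags, and flag vertices, together with the uniform decoding that yields both $R\equiv_T\mathcal{S}$ and equality of spectra.
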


\begin{proof}
This proof of this result is folklore, and has appeared in various forms in the published and unpublished literature. A complete proof can be found in \cite[Appendix A]{HKSS-2002}.
\end{proof}

\begin{lemma}
\label{lem_ceuniv} For every $\emptyset ^{\prime }$-computable graph $R$ on $\omega $ there exists a c.e.\ (and a co-c.e.)\ partial order $\leq _{P}$ on $\omega $ with the same degree spectrum. Furthermore, if $R$ has c.e.\ degree then $\deg (\leq _{P})=\deg (R)$.
\end{lemma}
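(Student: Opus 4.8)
The plan is to encode a given $\emptyset'$-computable graph $R$ on $\omega$ into a partial order whose comparability structure, in any copy, allows one to compute $R$ (so the copy's degree bounds $\deg(R)$), while the construction of $\leq_P$ from $R$ is itself c.e.\ relative to $\emptyset'$ in a way that can be collapsed to a genuine c.e.\ (resp.\ co-c.e.) order. The key device is the same one already used in the proof of Theorem~\ref{thm_cediff}: assign to each $n \in \omega$ a ``tag'' consisting of finitely many special points that let one recognize, in an arbitrary copy and computably from that copy, which element plays the role of $n$; and assign to each \emph{pair} $\langle m,n\rangle$ a gadget of points whose $\leq_P$-relationships to the tags of $m$ and $n$ witness whether $R(m,n)$ holds. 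Because $R$ is $\emptyset'$-computable, the statement $R(m,n)$ is $\Delta^0_2$, hence both $\Sigma^0_2$ and $\Pi^0_2$; I would use the $\Pi^0_2$ form for the co-c.e.\ construction (start with all the witnessing comparabilities present and remove them as apparent counterexamples appear, exactly as in Theorem~\ref{thm_cediff}) and the $\Sigma^0_2$ form for the c.e.\ construction.

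First I would lay out the partition of $\omega$ into computable blocks: a block of ``header'' elements $\{a,b,c,\dots\}$ analogous to those in Theorem~\ref{thm_cediff} used to pick out, in a copy, the sets of tag-elements and gadget-elements as elements below a fixed point; for each $n$ a block $T_n$ of tag elements coding $n$ as a sequence of length $n{+}1$ together with ``first'', ``last'', and ``consecutive'' markers so that the image of $T_n$ in any copy is computably identifiable and unique; and for each pair $\langle m,n\rangle$ a block $G_{m,n}$ of gadget elements. Second I would specify the initial computable order: the headers sit above their respective blocks, the tag-sequence order inside each $T_n$ is fixed, and each gadget $G_{m,n}$ is initially placed so that it lies below every element of $T_m$ and every element of $T_n$ (this is the ``$R(m,n)$ holds'' configuration). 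Third, the stage-by-stage co-c.e.\ construction: using the computable predicate witnessing $R \in \Delta^0_2$, whenever the approximation to $R(m,n)$ currently reads ``false'', remove the comparabilities between $G_{m,n}$ and the tags of $m$ and $n$ (and advance a witness counter), maintaining transitivity throughout just as in the earlier proof. Fourth, the verification: in any copy $(\omega,\leq_Q)$ one computably recovers the tag-blocks and, for each $m,n$, searches for the (unique) images of $T_m,T_n$ and checks whether there is a gadget element below all of both; this search is computable in $Q$, succeeds, and returns $R(m,n)$ correctly, so $Q \geq_T R$; conversely $\leq_P \leq_T \emptyset' \leq_T$(any copy, since the spectrum of the graph is closed upward and above $\mathbf{0}'$... ) — more precisely, since $R$ is $\emptyset'$-computable and the spectrum of $R$ consists of degrees computing $R$, I must check $\leq_P$ is computable from $R$'s own construction data, giving $\deg(\leq_P) \leq \deg(R) \oplus \mathbf{0}'$; and one gets the reverse containment of spectra because $\leq_P \leq_T \emptyset'$ and $\emptyset' \leq_T R$ is \emph{not} generally true, so the degree-spectrum equality has to be argued at the level of spectra, not single degrees. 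The c.e.\ case is the mirror image using the $\Sigma^0_2$ form, adding comparabilities instead of removing them.

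The two genuinely delicate points are these. The first is the ``Furthermore'' clause: when $R$ has c.e.\ degree, I want $\deg(\leq_P) = \deg(R)$ on the nose. The construction above gives $\leq_P \leq_T \emptyset'$ always and $\leq_P \geq_T$ (nothing obvious); to pin down the degree I would instead run the construction relative to an enumeration of a fixed c.e.\ set $W \in \deg(R)$ (available since $R$ has c.e.\ degree, so some c.e.\ set $W$ has $W \equiv_T R$), using a $\Sigma^0_2(W)$ or $\Pi^0_2(W)$ form of $R$ that is in fact $\Sigma^0_1(W)$-or-better because $R \leq_T W$; the removals/additions are then driven by a $W$-computable process with a $W$-computable approximation, making $\leq_P \leq_T W \equiv_T R$, while the copy-to-$R$ reduction above still gives $R \leq_T \leq_P$. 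Getting the complexity bookkeeping exactly right here — ensuring the order we build is honestly c.e.\ (resp.\ co-c.e.) as a subset of $\omega^2$, not merely c.e.\ relative to $W$ — is the main obstacle, and is handled by noting that ``c.e.\ in and above nothing'' is automatic for the co-c.e.\ construction (we only ever remove pairs from a computable starting relation according to a $W$-c.e.\ instruction stream, but $W$ itself being c.e.\ means the whole removal set is c.e., hence the order is co-c.e.). The second delicate point is maintaining transitivity while removing (or adding) the gadget comparabilities: one must check that the local surgery at pair $\langle m,n\rangle$ never forces an unwanted change elsewhere, which is arranged by making the blocks ``flat'' enough — the gadget elements are $\leq_P$-minimal except for sitting below the relevant tags and headers — so that the only transitivity consequences of removing $G_{m,n} <_P T_m$ are already-intended removals. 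I would carry out that check exactly as in the proof of Theorem~\ref{thm_cediff}, where the analogous ``the order is clearly co-c.e.'' step went through for the same structural reason.
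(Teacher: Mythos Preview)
Your proposal has a genuine gap in the verification step. You assert that in any copy $(\omega,\leq_Q)$ one ``checks whether there is a gadget element below all of both [$T_m$ and $T_n$]'' and that this search ``succeeds, and returns $R(m,n)$ correctly.'' But with the construction you describe, when $R(m,n)$ fails the approximation eventually reads ``false'' forever, every gadget attached to $\langle m,n\rangle$ is eventually severed from the tags, and \emph{no} gadget ends up below both $T_m$ and $T_n$. The search does not terminate in that case: existence of such a gadget is only $\Sigma_1$ in $Q$, not $\Delta_1$ in $Q$. What you have reproduced is the one-sided coding of Theorem~\ref{thm_cediff}, whose purpose was precisely to make a $\Sigma^0_2$ predicate merely $\Sigma^0_1$ in any putative c.e.\ copy. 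Here you need a two-sided coding making a $\Delta^0_2$ predicate $\Delta^0_1$ in \emph{every} copy, and nothing in your gadget design does that. This same gap undermines the spectrum argument you left unfinished (``\dots'') and the ``Furthermore'' clause, both of which rest on recovering $R$ computably from an arbitrary copy.

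The paper's device is to add two marker points $r_0,r_1$ and, for each pair $i<j$, infinitely many gadgets $g_{i,j,k}$ below $a_i$ and $a_j$. All but one of the $g_{i,j,k}$ are placed below \emph{both} $r_0$ and $r_1$; the single exception lies below exactly one of them, namely $r_1$ if $R(i,j)$ holds and $r_0$ otherwise. In any copy one then searches among the gadgets below a given pair for one that is \emph{not} below $r_0$ or \emph{not} below $r_1$; exactly one such gadget exists, so the search always halts, and which relation fails decides the edge. Because this isomorphism type depends only on $R$, any copy of $R$ builds a copy of $\leq_P$ directly, giving the spectrum equality without the approximation entering. The c.e.\ presentation and the degree clause then come from the modulus lemma: a computable $f(i,j,s)\to R(i,j)$ with modulus $m\equiv_T R$ drives which $g_{i,j,k}$ is the special one (namely $k=m(i,j)$), so $\leq_P\equiv_T m\equiv_T R$. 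Incidentally, your tag machinery from Theorem~\ref{thm_cediff} is unnecessary here: one only needs to produce a \emph{copy} of $R$ on the image of $A$, not to recognize which image point corresponds to which $i\in\omega$.
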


\begin{proof}
We begin by providing a uniform effective transformation taking any graph $R$ to a partial order $\leq _{P}$ with a uniformly effective inverse transformation taking any copy of this partial order to a copy of $R$. Thus $R$ and $\leq _{P}$ have the same degree spectrum. We partition $\omega $ computably into the following sets:

\begin{itemize}
\item $\{a,g,r_{0},r_{1}\}$;

\item $A=\{a_{i}:i\in \omega \}$

\item $G=\{g_{i,j,k}:i<j,k\in \omega \}$.
\end{itemize}
Intuitively, the $a_{i}$ represent the elements of the domain of a given copy of $R$ and the $g_{i,j,k}$ represent guesses at whether or not $R$ holds of the pair $(a_{i},a_{j})$ in that copy. We code these guesses using $r_{0}$ and $r_{1}$, with $r_{0}$ representing that $R$ does not hold, and $r_{1}$ that it does. (The guessing will be tied to a computable approximation of $R$ when $\deg(R)$ is c.e.) The numbers $a$ and $g$ help us recognize the pieces of the above partition in a given copy of $\leq _{P}$.

We can describe the isomorphism type of the desired partial order by saying that $A$ is the set of elements below $a$ and not below $g$, and $G$ is the set of elements below $a$ and $g$. For each $i<j$ and all $k$, $g_{i,j,k}$ is below $a_i$, $a_j$, and $g$. Additionally, all but one $g_{i,j,k}$ are below both $r_0$ and $r_1$, and exactly one is below just one of $r_0$ and $r_1$. The latter $g_{i,j,k}$ is below $r_0$ if $R(i,j)$ does not hold, and below $r_1$ otherwise. It is easy to see that we can build an order of this type computably in $R$, and that, conversely, any order of this type computes a copy of $R$ (and indeed, that the transformation in either direction preserves degree). The point here is that, given a copy of this partial order, we start with the elements corresponding to $a,g,r_{0}$ and $r_{1}$ and then compute a copy of $R$ with domain $A$ by, for each $a_0,a_1\in A$, searching for a $g\in G$ with $g\leq a_0,a_1$ such that $g \nleq r_{0}$ or $g\nleq r_{1}$. Exactly one of these must happen and we let there be an edge between $a_0$ and $a_1$ if and only if it is the first.

When $R$ is $\emptyset'$-computable, we make $\leq_P$ c.e.\ as follows. By Knight's theorem, as discussed above, we may assume that $R$ has c.e.\ degree, since if not we can replace it by a copy of degree $\0'$. Initially, we let $\leq_P$ be the transitive closure of the following conditions:

\begin{itemize}
\item $A <_P a$;

\item $G <_P g$;

\item $g_{i,j,k} \leq_P a_i,a_j$ for all $i < j$ and $k$.
\end{itemize}
By the modulus lemma there is a computable function $f(i,j,s)$ with $\lim_s f(i,j,s) = 0$ if $R(i,j)$ does not hold and $\lim_s f(i,j,s) = 1$ if it does, and with modulus of convergence $m$ of the same degree as $R$. At stage $s > 0$, for every $i < j \leq s$, we let $k$ be the largest number less than or equal to $s$ such that $f(i,j,k) \neq f(i,j,k-1)$, or $0$ if there is no such number. We make $g_{i,j,k} \leq_P r_{f(i,j,k)}$ and $g_{i,j,l} \leq_P r_0,r_1$ for all $l \leq s$ not equal to $k$.

It is easy to see from the limit properties of $f(i,j,s)$ that $\leq_P$ is c.e.\ and that it has the isomorphism type described above. Thus, in particular, its degree spectrum is the same as that of $R$. Furthermore, for each $i < j$ it is precisely $g_{i,j,m(i,j)}$ which is below just one of $r_{0}$ and $r_{1}$ (indeed, it is below just $r_{f(i,j,m(i,j))} = r_{\lim_s f(i,j,s)}$, as desired). Thus, $\leq_P$ is of the same degree as $m$, and so of the same degree as $R$.

To obtain instead a co-c.e.\ partial order, we modify the construction of $\leq _{P}$ by initially setting $g_{i,j,k}\leq_{P}r_{0}$ and $g_{i,j,k}\leq _{P}r_{1}$ for all $i<j,k\in \omega $, and then removing one of these relations when our guess about $R(i,j)$ changes (including by default at $k=0$) and both of them when it does not. In the end, there will be exactly one $k$ such that $g_{i,j,k}$ is below $r_{0}$ or $r_{1}$. It will be below just one of them and $R(i,j)$ will hold just in case it is below $r_{1}$. This completes the proof.
\end{proof}

\begin{theorem}
\label{thm_ceuniv} For every $\emptyset ^{\prime }$-computable structure $\mathcal{S}$ on $\omega$ there exists a c.e.\ (and a co-c.e.)\ partial order on $\omega $ with the same degree spectrum as $\mathcal{S}$. Furthermore, there exists such a partial order in every c.e.\ degree containing a copy of $\mathcal{S}$
.
\end{theorem}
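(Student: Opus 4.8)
The plan is to obtain this simply by composing the two preceding lemmas, with no new construction required. First, given a $\emptyset'$-computable structure $\mathcal{S}$ on $\omega$, I would apply Lemma \ref{lem_binrel} to produce a graph $R$ on $\omega$ with $R\equiv_T\mathcal{S}$ and with the same degree spectrum as $\mathcal{S}$. Since $\mathcal{S}\leq_T\emptyset'$ and $R\equiv_T\mathcal{S}$, the graph $R$ is itself $\emptyset'$-computable, so Lemma \ref{lem_ceuniv} applies and yields a c.e.\ (and a co-c.e.)\ partial order $\leq_P$ on $\omega$ whose degree spectrum equals that of $R$, hence that of $\mathcal{S}$. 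This gives the first assertion.

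For the \emph{furthermore} clause, fix a c.e.\ degree $\mathbf{d}$ that contains a copy $\mathcal{S}'$ of $\mathcal{S}$. Being an isomorphism invariant, the degree spectrum of $\mathcal{S}'$ is the same as that of $\mathcal{S}$, and $\deg(\mathcal{S}')=\mathbf{d}$. Applying Lemma \ref{lem_binrel} to $\mathcal{S}'$ produces a graph $R'$ with $R'\equiv_T\mathcal{S}'$, so $\deg(R')=\mathbf{d}$; in particular $R'$ has c.e.\ degree (and so is $\emptyset'$-computable). Invoking the last sentence of Lemma \ref{lem_ceuniv}, the resulting c.e.\ (co-c.e.)\ partial order $\leq_P$ satisfies $\deg(\leq_P)=\deg(R')=\mathbf{d}$, and its degree spectrum agrees with that of $R'$, i.e.\ with that of $\mathcal{S}$. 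Thus $\leq_P$ is the desired order of degree $\mathbf{d}$.

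The only points needing care are at the interface between the two lemmas: one must note that ``$\emptyset'$-computable'' and ``of c.e.\ degree'' are transferred from $\mathcal{S}$ (resp.\ from the copy $\mathcal{S}'$) to the associated graph, which is immediate from the Turing-equivalence clause of Lemma \ref{lem_binrel}, and that each step preserves the degree spectrum, which is exactly what the two lemmas assert. I do not expect any substantive obstacle in this argument; the genuine work — the uniform coding of a graph into a c.e.\ or co-c.e.\ partial order and the use of the modulus lemma to keep the degree intact — is already carried out in Lemma \ref{lem_ceuniv}, and the theorem is just the composition of the two reductions.
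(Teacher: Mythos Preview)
Your proposal is correct and matches the paper's own proof essentially verbatim: the theorem is obtained by composing Lemma~\ref{lem_binrel} and Lemma~\ref{lem_ceuniv}, using the Turing-equivalence clause of the former to pass the ``$\emptyset'$-computable'' and ``c.e.\ degree'' hypotheses to the graph, and invoking the degree-preservation clause of the latter for the furthermore part. Your write-up is in fact somewhat more explicit than the paper's, but there is no difference in approach.
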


\begin{proof}
By Lemma \ref{lem_binrel}, for every $\emptyset ^{\prime }$-computable copy of $\mathcal{S}$ on $\omega $ there is a graph $R$ on $\omega$ of the same degree and with the same degree spectrum. By Lemma \ref{lem_ceuniv}, there exists a c.e.\ (and a co-c.e.)\ partial order with the same degree spectrum as $R$, and if the degree of a given copy of $\mathcal{S}$, and hence of the corresponding one of $R$, is c.e., we may choose the order to be of that degree as well.
\end{proof}

As an immediately corollary, we have:

\begin{corollary}
\label{cor_equivsp}Let $\mathcal{S}$ be a structure with domain $\omega $. The following are equivalent:

\begin{enumerate}
\item $\mathcal{S}$ has a copy of c.e.\ degree;

\item $\mathcal{S}$ has a $\emptyset^{\prime}$-computable copy;

\item there is a c.e.\ (and a co-c.e.)\ partial order on $\omega $ with the same degree spectrum as $\mathcal{S}$.
\end{enumerate}
\end{corollary}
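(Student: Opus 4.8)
The plan is to prove the cycle of implications $(1)\Rightarrow(2)\Rightarrow(3)\Rightarrow(1)$, each step being short once Theorem~\ref{thm_ceuniv} is in hand. For $(1)\Rightarrow(2)$, I would just note that every c.e.\ degree lies below $\0'$, so a copy of $\mathcal{S}$ of c.e.\ degree is in particular $\emptyset'$-computable. For $(2)\Rightarrow(3)$, there is nothing to do: this is precisely the first assertion of Theorem~\ref{thm_ceuniv}, which produces from any $\emptyset'$-computable copy of $\mathcal{S}$ both a c.e.\ and a co-c.e.\ partial order on $\omega$ with the same degree spectrum as $\mathcal{S}$.

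For $(3)\Rightarrow(1)$, suppose we are given a c.e.\ partial order $\leq_P$ on $\omega$ (the case of a co-c.e.\ one being identical, since a co-c.e.\ relation has the same Turing degree as its c.e.\ complement) whose degree spectrum coincides with that of $\mathcal{S}$. Identifying $\leq_P$ with a subset of $\omega$ via a fixed computable pairing, $\deg(\leq_P)$ is a c.e.\ degree; and since $\leq_P$ is a copy of itself, $\deg(\leq_P)$ belongs to $\Spec(\leq_P)=\Spec(\mathcal{S})$. Hence $\mathcal{S}$ has a copy of c.e.\ degree, establishing $(1)$.

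I do not expect a genuine obstacle in the corollary itself: all three implications are routine, and the mathematical content is carried entirely by Theorem~\ref{thm_ceuniv} (and, through it, by Lemmas~\ref{lem_binrel} and~\ref{lem_ceuniv}). If anything deserves care, it is only the bookkeeping observation that a (co-)c.e.\ partial order, regarded as a subset of $\omega$, automatically has c.e.\ degree and lies in its own degree spectrum.
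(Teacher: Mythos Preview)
Your proposal is correct and matches the paper's treatment: the paper states this result as an ``immediate corollary'' of Theorem~\ref{thm_ceuniv} without giving a proof, and the cycle $(1)\Rightarrow(2)\Rightarrow(3)\Rightarrow(1)$ you spell out is exactly the routine argument one would supply. The only content beyond Theorem~\ref{thm_ceuniv} is the observation for $(3)\Rightarrow(1)$ that a (co-)c.e.\ relation has c.e.\ degree and lies in its own spectrum, which you handle correctly.
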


By Theorem \ref{thm_ceuniv}, we obtain a wide array of classes of degrees realized as degree spectra of c.e.\ (or co-c.e.)\ partial orders, including, for example, the class of degrees $\geq \mathbf{0}^{\prime }$. A particularly interesting class is that of the degrees strictly above $\mathbf{0}$. Recall the following well-known result:

\begin{theorem}[Slaman \protect\cite{Slaman-1998}, Wehner \protect\cite{Wehner-1998}]\label{thm_sw}
There exists a countable structure whose degree spectrum consists precisely of the nonzero degrees.
\end{theorem}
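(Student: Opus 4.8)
The plan is to give Wehner's enumeration-theoretic proof, which also meshes with the machinery developed above; the structure is as follows.

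\emph{From families of sets to structures.} I would first invoke the standard correspondence between families of sets and structures: to a countable family $\mathcal{F}$ of finite subsets of $\omega$ one associates a structure $\M_{\mathcal{F}}$ — which, by the method of Lemma~\ref{lem_binrel} or directly, may be taken to be a graph — whose copies computable in a set $X$ are in effective correspondence with the $X$-computable \emph{enumerations} of $\mathcal{F}$, i.e.\ the $X$-computable functions whose range is a set of canonical indices for exactly the members of $\mathcal{F}$ (see \cite{HKSS-2002}). Hence the degree spectrum of $\M_{\mathcal{F}}$ equals $\{\deg(X): \mathcal{F}\text{ is enumerable relative to }X\}$, a set automatically closed upward, in agreement with Knight's theorem. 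So it suffices to construct a family $\mathcal{F}$ of finite sets that is enumerable relative to every noncomputable set but has no computable enumeration; one then checks that the resulting $\M_{\mathcal{F}}$ is automorphically nontrivial (as $\mathcal{F}$ will be infinite, with infinitely many members of each finite cardinality) and has degree spectrum exactly the nonzero degrees.

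\emph{Wehner's family.} Following Wehner \cite{Wehner-1998}, I would organize $\mathcal{F}$ into columns, the $n$-th column consisting of the sets $\{n\}\oplus F$ with $F$ finite, but with a carefully chosen exceptional set $F_{n}$ (equivalently, a small exceptional subfamily) omitted. An enumeration of $\mathcal{F}$ is then a process that, uniformly in $n$, must eventually list every $\{n\}\oplus D$ with $D\neq F_{n}$ and never list $\{n\}\oplus F_{n}$. The sequence $\langle F_{n}\rangle$ is built by a finite-injury diagonalization: assign infinitely many indices $n$ to each candidate computable enumeration $W_{e}$, and, using a fresh such $n$, let $F_{n}$ be a set that $W_{e}$ has committed to listing in column $n$ (or a default set if $W_{e}$ never acts there). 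This yields (i) \emph{$\mathcal{F}$ has no computable enumeration.} Simultaneously one arranges (ii) \emph{$\mathcal{F}$ is enumerable relative to every noncomputable $X$}: certifying ``$D\neq F_{n}$'' is engineered to amount to detecting a change in a fixed computable approximation beyond a computable bound, and a noncomputable oracle provides just enough extra power to make all such certifications — here one uses that a set is noncomputable precisely when it computes a function that disagrees infinitely often with every computable function.

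\emph{The crux.} The difficulty I expect to dominate the work is achieving (i) and (ii) at the same time: the \emph{same} family must be enumerable from every noncomputable oracle, however close to computable, and yet not computably enumerable. Wehner's device is to make an enumeration of $\mathcal{F}$ equivalent, uniformly in the columns, to an approximation task that tolerates finitely many errors per column and whose successful strategies are exactly the functions escaping every computable trace — objects computable from every nonzero degree but not from $\0$. Pinning down the $F_{n}$ so that the diagonalization supporting (i) never blocks the escaping argument for (ii), and conversely, is the technical heart; the passage from $\mathcal{F}$ to $\M_{\mathcal{F}}$, the automorphic nontriviality check, and the spectrum computation are then routine bookkeeping. (Slaman \cite{Slaman-1998} instead builds such a structure directly, but the enumeration approach above is more transparent in the present context, and — combined with Theorem~\ref{thm_ceuniv}, since the Wehner structure has a $\emptyset'$-computable copy — immediately transfers to c.e.\ and co-c.e.\ partial orders.)
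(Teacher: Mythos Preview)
The paper does not prove Theorem~\ref{thm_sw}; it is quoted as a known result of Slaman and Wehner and used as a black box in deriving Corollary~\ref{cor_SWce}. So there is no ``paper's own proof'' to compare your proposal against.

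That said, your outline is a serviceable sketch of Wehner's argument, with one caveat: you overcomplicate the definition of the family. In Wehner's construction the family is given \emph{explicitly}, not built by a finite-injury procedure --- one simply takes $\mathcal{F}=\{\{e\}\oplus F: F\text{ finite},\ F\neq W_{e}\}$. The ``no computable enumeration'' direction then follows in one line from the recursion theorem (build a c.e.\ set $W_{e}$ that copies the first finite set the putative enumeration lists in column $e$), and the ``enumerable from every noncomputable $X$'' direction uses that any noncomputable $X$ computes a function dominating the settling times of all finite $W_{e}$. Your description of $\langle F_{n}\rangle$ as being produced by a priority construction, with talk of injury and of ``certifying $D\neq F_{n}$'' via change-detection, suggests a more involved mechanism than is actually needed; if you were to write this up, the referee would likely ask you to replace it with the direct definition. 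The passage from $\mathcal{F}$ to a structure $\M_{\mathcal{F}}$ and the spectrum identification are, as you say, routine.
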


\noindent Since any two copies of the same structure have the same degree spectrum, it follows that an $\mathcal{S}$ as above can be found in any nonzero degree. The following consequence of Theorem \ref{thm_ceuniv} is a considerably more effective version.

\begin{corollary}
\label{cor_SWce}Every nonzero c.e.\ degree contains a c.e.\ (and a co-c.e.) partial order on $\omega $ whose degree spectrum consists precisely of the nonzero degrees.
\end{corollary}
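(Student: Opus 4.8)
The plan is to derive the corollary directly from Theorem \ref{thm_ceuniv}, applied to a graph presentation of the Slaman--Wehner structure. By the Slaman--Wehner theorem (Theorem \ref{thm_sw}) there is a countable structure whose degree spectrum is exactly the set of nonzero Turing degrees; applying Lemma \ref{lem_binrel} to it, I would fix a graph $\mathcal{S}$ on $\omega$ with this same degree spectrum. Since $\mathbf{0}$ is not in the spectrum of $\mathcal{S}$, the graph $\mathcal{S}$ has no computable copy, and therefore $\mathcal{S}$ is automorphically nontrivial (an automorphically trivial graph is determined by finitely much data and so has a computable copy); thus the standing conventions of Section \ref{spectra} apply to $\mathcal{S}$. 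Moreover $\mathbf{0}'$, being a nonzero degree, lies in the spectrum of $\mathcal{S}$, so $\mathcal{S}$ has a $\emptyset'$-computable copy on $\omega$, and hence Theorem \ref{thm_ceuniv} may be applied to it.

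Now I would fix an arbitrary nonzero c.e.\ degree $\mathbf{d}$. As $\mathbf{d}$ is nonzero, it lies in the degree spectrum of $\mathcal{S}$, so $\mathbf{d}$ contains a copy of $\mathcal{S}$. By the ``furthermore'' clause of Theorem \ref{thm_ceuniv}, there is therefore a c.e.\ (and a co-c.e.)\ partial order on $\omega$, of degree $\mathbf{d}$, whose degree spectrum coincides with that of $\mathcal{S}$ --- that is, equals the set of nonzero degrees. Since $\mathbf{d}$ was arbitrary, this establishes the corollary.

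I expect no real obstacle: the substance lies entirely in Theorem \ref{thm_ceuniv} (built on Lemmas \ref{lem_binrel} and \ref{lem_ceuniv}) and in Theorem \ref{thm_sw}. The only bookkeeping is (i) checking the $\emptyset'$-computability hypothesis of Theorem \ref{thm_ceuniv}, which holds because $\mathbf{0}'$ is a nonzero degree and hence already in the spectrum, and (ii) making sure we work with an automorphically nontrivial structure, which is the reason for passing to a graph at the outset.
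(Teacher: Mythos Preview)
Your proposal is correct and follows essentially the same approach as the paper: fix a nonzero c.e.\ degree, take a Slaman--Wehner structure in that degree, and apply Theorem \ref{thm_ceuniv}. The only difference is that your explicit passage to a graph via Lemma \ref{lem_binrel} is unnecessary, since Theorem \ref{thm_ceuniv} already applies to arbitrary structures (the reduction to graphs is absorbed into its proof); your additional checks of automorphic nontriviality and the $\emptyset'$-computability hypothesis are sound but the paper leaves them implicit.
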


\begin{proof}
Fix a nonzero c.e.\ degree, and any structure in that degree satisfying Theorem \ref{thm_sw}. Then apply Theorem \ref{thm_ceuniv}.
\end{proof}

As partial orders are directed graphs, all the results of this section hold for directed graphs as well. As for graphs, one could prove a general coding result taking c.e.\ or co-c.e.\ partial orders to c.e.~ or co-c.e.\ graphs as in \cite[Appendix A]{HKSS-2002}. There is a simpler route in our case, however. The crucial item here is Lemma \ref{lem_ceuniv}. Note first that the comparability graph $\widetilde{R}$ of a partial order $\leq _{P}$, defined for $x,y$ in the domain of $\leq_P$ by $\widetilde{R}(x,y) \Iff x <_{P} y~\vee ~y < _{P} x$, is always computable from $\leq _{P}$ and is c.e.\ (co-c.e.)\ if $\leq _{P}$ is. Next, given a graph $R$ as in Lemma \ref{lem_ceuniv} and the corresponding  partial order $\leq _{P}$ computing $R$ constructed there, it is clear that the associated comparability graph $\widetilde{R}$ has enough information to compute $R$, and hence also $\leq _{P}$. It is then a c.e.\ (co-c.e.)\ graph of the same degree, and with the same degree spectrum, as $\leq_P$. Thus all the results of this section also hold for graphs in place of partial orderings.

\bibliographystyle{plain}
\bibliography{ceorderings}

\end{document}